\numberwithin{equation}{section}
\numberwithin{table}{section}
\newlength{\dhatheight}
\newcommand\reallywidehat[1]{%
\savestack{\tmpbox}{\stretchto{%
  \scaleto{%
    \scalerel*[\widthof{\ensuremath{#1}}]{\kern-.6pt\bigwedge\kern-.6pt}%
    {\rule[-\textheight/2]{1ex}{\textheight}}
  }{\textheight}%
}{0.5ex}}%
\stackon[1pt]{#1}{\tmpbox}%
}
\theoremstyle{plain}
\newtheorem{defn}{Definition}[section]
\newtheorem{lem}[defn]{Lemma}
\newtheorem{thm}[defn]{Theorem}
\newtheorem{prop}[defn]{Proposition}
\theoremstyle{remark}
\newtheorem{rem}[defn]{Remark}
\global\def\ca#1{\mathcal{#1}}
\newcommand{\EE}{\mathbb E}
\newcommand{\FF}{\mathbb F}
\newcommand{\LL}{\mathbb L}
\newcommand{\K}{\mathbb K}
\newcommand{\N}{\mathbb N}
\newcommand{\B}{\ca{B}}
\newcommand{\BV}{\B(V)}
\newcommand{\LLC}{\mathrm{LLC}}
\newcommand{\LC}{\mathrm{LC}}
\def\Vect{\mathrm{Vect}}
\def\Flow{\mathrm{Flow}}
\def\f{\phi}
\def\ent{\mathrm{ent}}
\def\Hom{\mathrm{CHom}}
\def\res{\mathrm{res}}
\def\Res{\mathrm{Res}}
\def\idn{\mathrm{ind}}
\def\Idn{\mathrm{Ind}}
\title{Topological entropy for locally linearly compact vector spaces\\ and field extensions\thanks{This work was supported by EPSRC Grant N007328/1 Soluble Groups and Cohomology.}}
\begin{document}
\date{}
\author{Ilaria Castellano\\{\footnotesize {\tt ilaria.castellano88@gmail.com}} \\ {\footnotesize University of Southampton}\\ {\footnotesize Building 54, Salisbury Road - 50171BJ Southampton (UK)}} 

\maketitle
\begin{abstract}
 Let $\K$ be a discrete field and $(V,\f)$ a flow over the category of locally linearly compact $\K$-spaces. Here we give the formulas to compute the topological entropy of $(V,\f)$ subject to the extension or the restriction of scalars.  
\end{abstract}
\section{Introduction} 
 Usually, an entropy function over a category $\mathcal{C}$  is an invariant 
$$ \mathrm{h}\colon \Flow(\ca{C})\to \N\cup\{\infty\},$$
of the category $\Flow(\ca{C})$ whose objects are the flows\footnote{A flow in $\mathcal{C}$ is a pair $(C,\gamma)$ consisting of an object $C$ of $\ca{C}$ and an endomorphism $\gamma$ of $C$.} $(C,\gamma)$ over $\mathcal{C}$. 
The first appearance of {\it topological entropy} was in 1965 when Adler, Konheim and McAndrew \cite{AKM} defined it for continuous self-maps of compact spaces. Afterwards, Bowen \cite{Bowen} and Hood \cite{Hood} brought a notion of topological entropy into the world of uniformly continuous self-maps on uniform spaces (also known as {\it uniform entropy}) opening up  the possibility for computing entropy for any given continuous endomorphism $\f\colon G \to G$ of a topological group $G$. E.g., profinite groups and locally compact groups. 

Interest in the locally compact case  is currently very high. Not only because locally compact groups are omnipresent in several areas of mathematics but also because there exists a strict relation between the topological entropy $h_{top}$ and the scale function defined in \cite{Willis} whenever one considers endomorphisms of totally disconnected locally compact groups (see \cite{BDGB,GBV} for  details).  By analogy with the topological entropy $h_{top}$ for totally disconnected locally compact groups, the topological entropy $\ent^*$  has been introduced in the category ${}_\K\LLC$ of locally linearly compact $\K$-spaces \cite{CGB} whenever $\K$ is a discrete field. The main motivation for studying such an entropy function is  to reach a better understanding of $h_{top}$ by means of the more rigid case of locally linearly compact  spaces.  

Recall that a topological vector space $V$ over a discrete field $\K$ is said to be {\it locally linearly compact} if $V$ admits a local basis at 0 consisting of linearly compact open $\K$-subspaces. In particular, a topological vector space $E$ is {linearly compact} when
\begin{enumerate}[(LC1)]
 \item it is a Hausdorff space in which there is a fundamental system of neighbourhoods of 0 consisting of linear subspaces of $E$;
\item any filter base on $E$ consisting of closed linear varieties has a non-empty intersection.
\end{enumerate}

Notice that any locally linearly compact $\K$-space $V$ is clearly a totally disconnected locally compact abelian group - whenever $\K$ is a finite field - and
$$h_{top}(V,\f) = \ent^*(V,\f) \cdot \log |\K|,$$
for every continuous endomorphism $\f\colon V\to V$ (see \cite[Proposition~3.9]{CGB}).

In this paper, we see that both the procedures of restriction and extension of scalars continue to be available in the context of locally linearly compact vector spaces (see \S~\ref{ss:changeLLC}). Namely, given a finite field extension $\FF\leq\K$, there exist functors
$$\Res^\K_\FF(-)\colon{}_\K\LLC\to{}_\FF\LLC\quad\text{and}\quad\Idn^\K_\FF(-)\colon{}_\FF\LLC\to{}_\K\LLC$$
forming an adjunction by
$$\Hom_\K(\Idn^\K_\FF(-),-)\cong\Hom_\FF(-,\Res^\K_\FF(-)).$$

Therefore it is reasonable to ask whether the topological entropy is effected by the change of scalar fields and the question is addressed by the formulas
$$\ent_\FF^*(\Res^\K_{\FF}(V,\f))=[\K:\FF]\cdot\ent_\K^*(V,\f)\quad\text{and}\quad\ent_\LL^*(\Idn_\K^{\LL}(V,\f))=\ent_\K^*(V,\f),$$
that are proved in Theorem~\ref{thm:main} for $\FF\leq\K\leq\LL$ finite field extensions. Moreover, the condition on finite extension is necessary.
There are no surprises in either the statement or the proof of Theorem~\ref{thm:main}, but its content does not appear anywhere else.  Thus this paper is intended to be just a complement to \cite{IC,CGB} where $\ent^*$ for locally linearly compact $\K$-spaces has been introduced and studied. 

\section{Preliminaries and basic properties} Throughout, the topology on arbitrary fields will always be the discrete topology. We summarise here some of the  properties of linearly compact vector spaces and the topological entropy $\ent^*$ (see \cite{CGB} for references). We will use them in the rest of the paper sometimes with no previous acknowledgement.

\subsection{Linearly compact vector spaces}  
A topological $\K$-space is said to be {\it linearly topologized} if it admits a local basis at 0 of $\K$-linear subspaces.
For linearly topologized $\K$-spaces $V, V'$ and $W$ such that $W\leq V$, the following hold:
\begin{enumerate}[(P1)]
\item If $W$ is linearly compact, then $W$ is closed in $V$.
\item If $V$ is linearly compact and $W$ is closed, then $W$ is linearly compact.
\item Linear compactness is preserved by continuous homomorphisms.
\item If $V$ is discrete, then $V$ is linearly compact if and only if $V$ has finite dimension.
\item If $W$ is closed, then $V$ is linearly compact if and only if $W$ and $V/W$ are linearly compact.
\item The direct product of linearly compact $\K$-spaces is linearly compact.
\item An inverse limit of linearly compact $\K$-spaces is linearly compact.
\item If $V$ is linearly compact, then $V$ is complete.
\item Let $V$ be linearly compact. Every continuous $\K$-linear $f\colon V\to V'$ is open onto its image. 
\item Linearly compact  $\K$-spaces satisfy Lefschetz duality.
\item Every linearly compact $\K$-space is topologically a direct product of copies of $\K$.
\end{enumerate}
\subsection{Complete tensor product and profinite plagiarism}
Given  linearly compact $\K$-spaces $V$ and $W$, a linearly compact $\K$-space $T$ (together with a bilinear map $b\colon V\times W\to T$) is a {\it complete tensor product} of $V$ and $W$ over $\K$ if it satisfies the following universal property: for every linearly compact $\K$-space $Z$ and every continuous bilinear map $\beta\colon V\times X\to Z$ there exists a continuous bilinear map $\hat\beta\colon T\to Z$ such that
\begin{equation}
\xymatrix{
V\times X\ar[r]^-b\ar[d]_-{\beta}&T\ar[dl]^-{\hat\beta}\\
Z}
\end{equation}
commutes. It is easy to see that if the complete tensor product exists, it is unique up to isomorphism. We denote it by $V\widehat\otimes_\K W$.
\begin{prop}\label{prop:ctp}
With the above notation, the complete tensor product  $V\widehat\otimes_\K W$ exists. In fact, if
$$V=\varprojlim_{i\in I} V_i\quad\text{and}\quad W=\varprojlim_{j\in J}W_j,$$
where each $V_i$ (respectively, $W_j$) is a finite-dimensional $\K$-space endowed with the discrete topology, then
$$ V\widehat\otimes_\K W= \varprojlim_{i\in I,j\in J}(V_i\otimes_\K W_j),$$
where $V_i\otimes_\K W_j$ is the usual tensor product as abstract $\K$-spaces. In particular, $V\widehat\otimes_\K W$ is the (linearly compact) completion of $V \otimes_\K W$, where $V \otimes_\K W$ has the topology for which a fundamental system of neighbourhoods of 0 are the kernels of the natural maps
$$V\otimes_\K W\to  V_i \otimes_\K W_ j\quad( i \in I , j \in J).$$
\end{prop}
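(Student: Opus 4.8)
The plan is to construct the candidate $T:=\varprojlim_{i\in I,\,j\in J}(V_i\otimes_\K W_j)$ explicitly and to verify directly that it satisfies the defining universal property; since uniqueness up to isomorphism is already granted, this proves existence and the stated formula in one go. First I would record that a presentation of the required form always exists: letting $U$ range over the open linear subspaces of $V$, each quotient $V/U$ is discrete and, being a continuous image of a linearly compact space, is linearly compact by (P3), hence finite-dimensional by (P4); and $V=\varprojlim_U V/U$, similarly for $W$. Each $V_i\otimes_\K W_j$ is then finite-dimensional and discrete, so $T$ is an inverse limit of linearly compact $\K$-spaces and is therefore linearly compact by (P7). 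The projections $V\to V_i$ and $W\to W_j$ are surjective, and composing them with the tensor maps $V_i\times W_j\to V_i\otimes_\K W_j$ yields a compatible family of bilinear maps $V\times W\to V_i\otimes_\K W_j$; these assemble into the continuous bilinear map $b\colon V\times W\to T$, continuity holding because each component factors through a discrete space via continuous projections.

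The core of the argument is the universal property. Given a continuous bilinear $\beta\colon V\times W\to Z$, I would first reduce to $Z$ finite-dimensional discrete by writing $Z=\varprojlim_k Z_k$ and treating each $Z_k=\pi_k\circ\beta$ separately, the resulting maps being compatible with the bonding maps by uniqueness. The key lemma — and the step I expect to be the main obstacle — is that for discrete $Z$ the map $\beta$ factors through some $V_i\otimes_\K W_j$. Continuity at $(0,0)$ provides open subspaces $U\leq V$ and $U'\leq W$ with $\beta(U\times U')=0$; lifting a basis of the finite-dimensional quotient $V/U$ and intersecting the (open) kernels of the finitely many continuous linear maps $\beta(v_s,-)\colon W\to Z$ produces an open subspace $W''\leq W$ with $\beta(V\times W'')=0$, and symmetrically an open $V''\leq V$ with $\beta(V''\times W)=0$. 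Hence $\beta$ descends to a bilinear map $(V/V'')\times(W/W'')\to Z$ of finite-dimensional spaces, which by the universal property of the ordinary tensor product factors uniquely through $(V/V'')\otimes_\K(W/W'')$; as $V/V''$ and $W/W''$ occur among the defining quotients, precomposing with the corresponding projection $T\to V_i\otimes_\K W_j$ delivers the desired $\hat\beta$.

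Finally I would settle uniqueness and the completion statement. Uniqueness of $\hat\beta$ follows once the linear span of $b(V\times W)$ is shown to be dense in $T$: its image under each projection $T\to V_i\otimes_\K W_j$ is the span of the elementary tensors $\bar v\otimes\bar w$, which is all of $V_i\otimes_\K W_j$ because $V\to V_i$ and $W\to W_j$ are onto, and a subspace surjecting onto every finite-dimensional quotient is dense; thus a continuous map into the Hausdorff space $Z$ is determined on it. Reassembling the compatible maps $\hat\beta_k$ yields the general $\hat\beta$. For the concluding clause, the surjections $V\otimes_\K W\twoheadrightarrow V_i\otimes_\K W_j$ identify the Hausdorff quotients of $V\otimes_\K W$ in the stated topology with the $V_i\otimes_\K W_j$, so that $T=\varprojlim(V_i\otimes_\K W_j)$ is precisely the linearly compact completion of $V\otimes_\K W$, consistent with (P8).
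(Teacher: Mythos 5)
Your proposal is correct and follows the same skeleton as the paper's proof (which adapts the profinite argument of Ribes--Zalesskii): build $T=\varprojlim(V_i\otimes_\K W_j)$, reduce the universal property to a finite-dimensional discrete target $Z$, factor $\beta$ through a single stage $V_i\times W_j$, and invoke the universal property of the ordinary tensor product. The one substantive difference is that the paper simply \emph{asserts} the factorization of a continuous bilinear $\beta\colon V\times W\to Z$ through some $V_i\times W_j$, relegating it to a footnote, whereas you actually prove it: starting from $\beta(U\times U')=0$ given by continuity at $(0,0)$, you lift a basis of the finite-dimensional quotient $V/U$ and intersect the open kernels of the finitely many maps $\beta(v_s,-)$ to get $\beta(V\times W'')=0$, and symmetrically $\beta(V''\times W)=0$, so that $\beta$ descends to $(V/V'')\times(W/W'')$. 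This argument is sound and is precisely the content the paper's footnote waves at. You also supply two items the paper omits entirely: the density of the span of $b(V\times W)$ in $T$ (which is what makes $\hat\beta$ unique and justifies the ``completion'' clause of the statement), and the observation that the construction should be run with the canonical presentation by all open subspaces (or a cofinal system), which together with the uniqueness-up-to-isomorphism remark handles arbitrary presentations. In short: same route, but your write-up closes the gaps the paper leaves open.
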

\begin{proof} Basically, we use the strategy developed in the proof of \cite[Lemma~5.5.1]{RZ}. For 
\begin{equation} 
V\times W\cong_{top}\varprojlim_{i \in I , j \in J}  (V_i \times  W_ j),
\end{equation}
there exists a canonical continuous $\K$-bilinear map
\begin{equation}
\iota\colon V\times W\to \varprojlim_{i\in I,j\in J}(V_i\otimes_\K W_j).
\end{equation}
Set $v\hat\otimes w:=\iota(v,w)$ for any $(v,w)\in V\times W$. Since every linearly compact $\K$-space is the inverse limit of its finite-dimensional $\K$-quotients, it suffices to check the universal property only for an arbitrary $\K$-space, say $F$, of finite dimension. Suppose  $\beta\colon V\times W\to F$ to be  continuous and $\K$-bilinear. For $\dim_\K(F)<\infty$, there is a pair of indices $(i,j)\in I\times J$ and a continuous $\K$-bilinear map $\beta_{ij}\colon V_i\times Wj\to F$ such that the diagram
\begin{equation}
\xymatrix{
V\times W\ar[r]^-\beta\ar[dr]&F\\
&V_i\times W_j\ar@{-->}[u]_{\beta_{ij}}}
\end{equation}
commutes\footnote{One can check that the dual factorisation does always exist  in the dual category ${}_\K\Vect$}. Now one uses the universal property of $V_i\otimes_\K W_j$ to produce a continuous $\K$-bilinear map $\hat\beta_{ij}\colon V_i\otimes_\K W_j\to F$ such that $\hat\beta_{ij}(v_i\otimes w_j)=\beta(v_i,w_j)$. Finally, define $\beta\colon V\widehat\otimes_\K W\to F$ to be 
\begin{equation}
\xymatrix{V\widehat\otimes_\K W\ar[r]\ar@/^2pc/[rr]^\beta& V_i\otimes_\K W_j\ar[r]_{\qquad\hat\beta_{ij}} &F.}
\end{equation}
\end{proof}
Moreover, the right-exact covariant functor $V\widehat\otimes_\K -\colon{}_\K\LC\to{}_\K\LC$ is additive and satisfies
\begin{equation}\label{eq:ctp}
V\widehat\otimes_\K \K\cong_{top} V,
\end{equation}
where the isomorphism is natural in $V$. Clearly, similar properties hold for $-\widehat\otimes_\K W$.
\begin{rem}
Since the complete tensor product is unique up to isomorphism (when it does exist), the construction above does not depend on the (inverse limit)-representation of $V$ and $W$.
\end{rem}
\begin{rem} The tensor products here are merely vector spaces, but complete tensor products can be defined for more general object, e.g.,  linearly compact modules over linearly compact commutative rings.
\end{rem}
\subsection{Topological entropy on ${}_\K\LLC$}
Let $(V,\f)$ be a flow over ${}_\K\LLC$ and denote by $\B(V)$ the collection of all linearly compact open $\K$-subspaces of $V$. Then the {\it topological entropy} of $(V,\f)$ is defined to be
\begin{equation}\label{eq:top}
\ent_\K^*(V,\f)=\sup_{U\in\BV}H_\K^*(\f,U),
\end{equation}
where $H_\K^*(\f,U)=\lim_{n\to\infty}\frac{1}{n}\dim_\K(U/(U\cap\f^{-1}U\cap\ldots\cap\f^{-n+1}U))$. Moreover, the linearly compact open $\K$-space $U\cap\f^{-1}U\cap\ldots\cap\f^{-n+1}U$ is called {\it $n$-cotrajectory of $\f$ in $U$} and it is denoted by $C^\K_n(\f,U)$.
\begin{enumerate}[eP1]
\item ({\it Invariance under conjugation}) For every topological isomorphism $\alpha\colon V\to W$  of locally linearly compact $\K$-spaces, 
$\ent_\K^*(W,\alpha\phi\alpha^{-1})=\ent_\K^*(V,\phi).$
\item ({\it Monotonicity}) For a closed $\K$-subspace  $W$ of $V$ such that $\f(W)\leq W$, 
 $$\ent_\K^*(V,\f)\geq \max\{\ent_\K^*(W,\f\restriction_W),\ent_\K^*(V/W,\overline\f)\}$$
 where $\overline\f\colon V/W\to V/W$ is induced by $\f$,
 \item ({\it Logarithmic law}) If $k\in\N$, then 
 $\ent_\K^*(V,\phi^k)=k \cdot \ent_\K^*(V,\phi)$
\item ({\it Continuity on inverse limits}) Let $\{W_i\mid i\in I\}$ be a directed system (for inverse inclusion) of closed $\K$-subspaces of $V$ such that $\f(W_i)\leq W_i$. If $V=\varprojlim V/W_i$, then $\ent_\K^*(\phi)=\sup_{i\in I} \ent_\K^*(\overline\phi_{W_i})$, where any $\overline\f_{W_i}\colon V/W_i\to V/W_i$ is the continuous endomorphism induced by $\f$.
\item ({\it Change of basis}) For every local basis $\B\subseteq\BV$ at 0, one has $\ent_\K^*(V,\f)=\sup_{U\in\B}H_\K^*(\f,U)$.
\end{enumerate}
Notice that the topological entropy over ${}_\K\LLC$ was originally denoted in \cite{CGB} simply by $\ent^*$. Here we need to keep in mind the field we are working on.
\section{Change of fields}\label{s:change}
Firstly, we face the case of linearly compact vector spaces and then we deduce from it the general case concerning locally linearly compact spaces.
\subsection{Restriction of scalars over linearly compact vector spaces}
 
Let $\FF\leq \K$ be a field extension of finite degree $[\K:\FF]\in\N$. Since any $\K$-space can be regarded as $\FF$-space via the inclusion $\FF\hookrightarrow\K$, one has a functor of abstract vector spaces
\begin{equation}
\res^\K_{\FF}(-)\colon{}_\K\Vect\to{}_{\FF}\Vect,
\end{equation}
which is usually called {\it restriction of scalars}. Now let $V$ be a linearly compact $\K$-space, i.e., $V$ is topologically a direct product of (discrete) 1-dimensional $\K$-spaces (endowed with the Tychonoff topology). Since  restriction  commutes with products, the underlying abstract $\FF$-space is
\begin{equation}
\res^\K_{\FF}(V)=\prod\res^\K_{\FF}(\K)\cong\prod\FF^{[\K:\FF]}.
\end{equation}
From the topological point of view,  the abstract $\FF$-space $\res^\K_{\FF}(V)$ comes equipped with the product topology of discrete finite-dimensional $\FF$-spaces. Therefore  $V$ is a linearly compact $\FF$-space. Thus one also obtains a {\it restriction functor}
\begin{equation}\label{eq:resLC}
\Res^\K_{\FF}(-)\colon{}_\K\LC\to{}_{\FF}\LC.
\end{equation}
for linearly compact vector spaces and field extensions.

Notice that  $[\K:\FF]< \infty$ is a necessary condition, since a discrete linearly compact vector space has to be finite-dimensional.
\subsection{Extension of scalars over linearly compact vector spaces}
Let $\K\leq\LL$ be a finite field extension, i.e., $\LL$ is a linearly compact $\K$-space via multiplication. Firstly, notice that
 $V \otimes_\K W$ is dense in $V\widehat\otimes_\K W$ since the complete tensor product is a topological completion of the abstract tensor product. Indeed $V\widehat\otimes_\K W$ is topologically spanned by the set of elements of the form $v\hat\otimes w$. Therefore one can define a structure of topological $\LL$-space on the linearly compact $\K$-space $\LL\widehat\otimes_\K V,$
by extending the natural action 
\begin{equation}
l\cdot(l'\hat\otimes v):=(ll')\hat\otimes v,\quad\text{for all}\ l,l'\in\LL, v\in V. 
\end{equation}
We denote this topological $\LL$-space by $\Idn^\LL_\K(V)$.
\begin{prop} For every linearly compact $\K$-space $V$ and a finite extension $\K\leq\LL$, the induced $\LL$-space
 $\Idn^\LL_\K(V)$ is topologically isomorphic to 
 \begin{enumerate}
 \item  $\LL\otimes_\K V$ with the topology for which a local basis at 0 are the kernels of the natural $\LL$-maps
$$\LL\otimes_\K V\to  \LL\otimes_\K V_ i,\quad i \in I,$$
where $V=\varprojlim_{i\in I} V_i$ for some family $\{V_i\}$ of finite-dimensional $\K$-spaces.
\item $\prod_J\LL$ with Tychonoff topology, where $V$ has been regarded as $\prod_J\K$.
\end{enumerate}
In particular, $\Idn^\LL_\K(V)$ is a linearly compact $\LL$-space. 
\end{prop}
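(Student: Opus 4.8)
The plan is to reduce everything to Proposition~\ref{prop:ctp} together with the single observation that $\LL$ is a \emph{finite-dimensional} $\K$-space (since $[\LL:\K]<\infty$) and therefore coincides with its own trivial inverse limit of discrete finite-dimensional $\K$-spaces. Writing $V=\varprojlim_{i\in I}V_i$ with each $V_i$ finite-dimensional and discrete, Proposition~\ref{prop:ctp} then yields
$$\Idn^\LL_\K(V)=\LL\widehat\otimes_\K V=\varprojlim_{i\in I}(\LL\otimes_\K V_i),$$
and identifies $\LL\widehat\otimes_\K V$ as the completion of $\LL\otimes_\K V$ for the topology whose basic neighbourhoods of $0$ are the kernels of the maps $\LL\otimes_\K V\to\LL\otimes_\K V_i$. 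Hence for part (1) it only remains to check that this completion is redundant, i.e. that $\LL\otimes_\K V$ is \emph{already} complete for that topology.

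For part (1) I would fix a $\K$-basis $e_1,\dots,e_d$ of $\LL$, where $d=[\LL:\K]$, so that $\LL\otimes_\K V\cong V^{d}$ as abstract $\K$-spaces via $\sum_k e_k\otimes v_k\mapsto(v_1,\dots,v_d)$. Under this identification the kernel of $\LL\otimes_\K V\to\LL\otimes_\K V_i$ is $\bigoplus_{k=1}^{d}e_k\otimes\ker(V\to V_i)$, that is $(\ker(V\to V_i))^{d}$; since the subspaces $\ker(V\to V_i)$ form a local basis at $0$ of $V$, the given topology on $\LL\otimes_\K V$ is exactly the product topology on $V^{d}$. By (P6) the finite product $V^{d}$ is linearly compact, hence complete by (P8), so $\LL\otimes_\K V$ already coincides with its completion $\LL\widehat\otimes_\K V$. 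The natural map $\LL\otimes_\K V\to\LL\widehat\otimes_\K V$, $l\otimes v\mapsto l\hat\otimes v$, is visibly $\LL$-linear for the action $l\cdot(l'\hat\otimes v)=(ll')\hat\otimes v$, so it is the required topological isomorphism of $\LL$-spaces. In particular $\Idn^\LL_\K(V)$ is a linearly compact $\LL$-space.

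For part (2) I would invoke (P11) to write $V\cong_{top}\prod_J\K$ and present this as the inverse limit $\varprojlim_{F}\K^{F}$ over the finite subsets $F\subseteq J$ ordered by inclusion, each $\K^{F}$ being finite-dimensional and discrete. Feeding this into part (1) (or directly into Proposition~\ref{prop:ctp}) gives
$$\Idn^\LL_\K(V)=\varprojlim_{F}(\LL\otimes_\K\K^{F})=\varprojlim_{F}\LL^{F}\cong_{top}\prod_J\LL,$$
where the final inverse limit over finite $F$ with the coordinate projections is precisely $\prod_J\LL$ equipped with the Tychonoff topology; the $\LL$-action transported along these isomorphisms is the componentwise one, as required.

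The one genuinely substantive step is the claim in part (1) that no completion is needed: this is where the finiteness of $[\LL:\K]$ is indispensable, since it is exactly what allows the finite sum $\bigoplus_{k=1}^{d}e_k\otimes V$ to commute with the inverse limit defining $V$. For an infinite extension the abstract tensor product would fail to be complete and the identification with a plain product would break down, consistent with the paper's standing remark that finiteness of the degree is necessary. The remaining verifications — matching the two topologies and checking $\LL$-linearity of the natural maps — are routine bookkeeping.
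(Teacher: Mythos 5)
Your proposal is correct and follows essentially the same route as the paper: both arguments exploit $\LL\cong\K^{d}$ with $d=[\LL:\K]<\infty$ together with the additivity of $-\widehat\otimes_\K V$ and the identification $V\widehat\otimes_\K\K\cong_{top}V$ to see that no completion of $\LL\otimes_\K V$ is needed, and both obtain (2) by writing $V\cong_{top}\prod_J\K=\varprojlim_{F}\K^{F}$ over finite subsets $F\subseteq J$. Your write-up merely makes explicit the verifications (identification of the kernel subspaces with $(\ker(V\to V_i))^{d}$ and completeness of $V^{d}$ via (P6) and (P8)) that the paper leaves implicit.
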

\begin{proof} Since $\LL\cong\K^n$ for some $n$, (1) follows by \eqref{eq:ctp} and Proposition~\ref{prop:ctp}.
 
Finally, (2) follows by (1) and the fact that
$$V\cong_{top}\varprojlim_{F\in\mathcal{F}(J)}\prod_{j\in F}\K,$$
where $\mathcal{F}(J)$ is the directed set of all finite subsets of $J$.
\end{proof}
Therefore it is well-defined the {\it induction functor}
\begin{equation}\label{eq:idnLC}
\Idn_\K^{\LL}(-)\colon{}_{\K}\LC\to{}_{\LL}\LC,
\end{equation}
 whenever $\LL$ is an extension of $\K$ of finite degree. Moreover, the  induction functor $\Idn^\LL_\K(-)$ turns out to be left-adjoint to the restriction functor $\res^\LL_\K(-)$ as usual, i.e., $\LL\widehat\otimes_\K V$ satisfies the following universal property: for every linearly compact $\LL$-space $W$ and every continuous $\K$-linear map $f\colon V\to W$ there exists a unique continuous $\LL$-linear map $\hat f\colon \LL\widehat\otimes_\K V\to W$ such that
 $$
 \xymatrix{
V\ar[r]\ar[d]_{f}&\LL\widehat\otimes_\K V\ar@{-->}[dl]^{\hat f}\\
W
}$$
commutes.
\subsection{Restriction and Induction over ${}_\K\LLC$}\label{ss:changeLLC} Let $\FF\leq\K\leq\LL$ be finite field extensions. Here we define the restriction functor 
\begin{equation}\label{eq:resLLC}
\Res^\K_{\FF}(-)\colon{}_\K\LLC\to{}_{\FF}\LLC.
\end{equation}
and the induction functor 
\begin{equation}\label{eq:idnLLC}
\Idn_\K^{\LL}(-)\colon{}_{\K}\LLC\to{}_{\LL}\mathrm{LLC},
\end{equation}
over the category of locally linearly compact vector spaces relying on the construction given above for linearly compact vector spaces. With abuse of notation, those functors shall still be denoted by $\Res^*_\bullet(-)$ and $\Idn^*_\bullet(-)$. It unlikely causes confusion since the functors defined here coincide with \eqref{eq:resLC} and \eqref{eq:idnLC} over the corresponding subcategory of linearly compact vector spaces.

 Let $V$ be an object in ${}_\K\LLC$. The locally linearly compact $\FF$-space $\Res^\K_\FF(V)$ is defined to be the abstract $\FF$-space $V$ endowed with the topology that is  locally generated at 0 by
\begin{equation}
\{\Res^\K_\FF(U)\mid U\ \text{linearly compact open in}\ V\},
\end{equation}
(compare with \eqref{eq:resLC}). Analogously, $\Idn^\LL_\K(V)$ shall be the abstract $\LL$-space $\LL\otimes_\FF V$ together with  
\begin{equation}
\{\Idn^\LL_\K(U)\mid U\ \text{linearly compact open in}\ V\},
\end{equation}
where each $\Idn^\LL_\K(U)$ (compare with \eqref{eq:idnLC}) can be  identified with a $\K$-subspace of $\LL\otimes_\FF V$ since $\LL$ is free over $\K$. One can easily check that the functors so defined are additive and form the adjunction required, i.e.,
\begin{equation}
\textup{CHom}_\LL(\Idn^\LL_\K(V),W)\cong\textup{CHom}_\K(V,\Res^\LL_\K(W)).
\end{equation}
Since a functor $F\colon {}_\K\LLC\to{}_\bullet\LLC$ induces a functor of flows, we shall simplify the notation as following:
\begin{enumerate}
\item $C^\bullet_n(F(\f,U)):=C^\bullet_n(F(\f),F(U))$;
\item $H^*_\bullet(F(\f,U)):=H^*_\bullet(F(\f),F(U))$;
\item $\ent^*_\bullet(F(V,\f)):=\ent^*_\bullet(F(V),F(\f))$;
\end{enumerate}
where $\bullet\in\{\FF,\K,\LL\}$.
\section{Topological entropy after induction and restriction}
\subsection{Linearly compact case}
 Let $\FF\leq\K\leq\LL$ be field extensions such that $[\LL:\FF]<\infty$. Let $V$ be a linearly compact vector space and let $${}_V\beta\colon \prod_{n=0}^\infty V\to \prod_{n=0}^\infty V,\quad (v_n)_{n\in\N}\mapsto (v_{n+1})_{n\in\N},$$ denote the left Bernoulli shift of $V$. When $V= \K$ then ${}_\K\beta$ is the one-dimensional left Bernoulli shift that has topological entropy $=1$. Clearly,  its image under restriction is nothing but the ($[\K:\FF]$-dimensional) left Bernoulli shift of $\Res^\K_{\FF}(V)=\prod_{n=0}^\infty \FF^{[\K:\FF]}$. Therefore,
$$\ent_\FF^*(\Res^\K_{\FF}(V, {}_\K\beta))=\ent^*_\FF(\prod_{n=0}^\infty \FF^{[\K:\FF]},{}_{\FF^{[\K:\FF]}}\beta)=[\K:\FF]=[\K:\FF]\cdot\ent_\K^*(V,{}_\K\beta),$$
by \cite[Example~3.13(b)]{CGB}.
On the other hand, the induction gives rise to the (1-dimensional) left Bernoulli shift on $\Idn_\K^{\LL}(V)=\prod_{n=0}^\infty\LL$, i.e.,
$$\ent_\LL^*(\Idn_\K^{\LL}(V,{}_\K\beta))=\ent^*_\LL(\prod_{n=0}^\infty\LL,{}_\LL\beta)=1=\ent_\K^*(V,{}_\K\beta).$$
The following result shows that the formulas above hold for every flow $(V,\phi)$ over ${}_\K\LC$.
\begin{prop}\label{prop:main}
Let $\FF\leq\K\leq\LL$ such that $[\LL:\FF]<\infty$. For every flow $(V,\f)$ over ${}_\K\LC$ one has
$$\ent_\FF^*(\Res^\K_{\FF}(V,\f))=[\K:\FF]\cdot\ent_\K^*(V,\f)\quad\text{and}\quad\ent_\LL^*(\idn_\K^{\LL}(V,\f))=\ent_\K^*(V,\f).$$
\end{prop}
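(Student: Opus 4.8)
The plan is to prove both identities by the same scheme, exploiting that by the change-of-basis property (eP5) the entropy is a supremum of the local invariants $H^*_\bullet$ over \emph{any} local basis at $0$. Write $m=[\K:\FF]$ and $r=[\LL:\K]$. The starting remark is that for a flow $(V,\f)$ over ${}_\K\LC$ and $U\in\BV$, each cotrajectory $C^\K_n(\f,U)$ is a finite intersection of open linearly compact $\K$-subspaces, hence open and linearly compact, so $U/C^\K_n(\f,U)$ is discrete and linearly compact and therefore finite-dimensional by (P4). This is what makes $H^*_\K(\f,U)=\lim_n\frac1n\dim_\K\big(U/C^\K_n(\f,U)\big)$ a genuine limit of finite quantities, and it is the quantity I will track under each functor.

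First I would show that the relevant functor commutes with the formation of cotrajectories. For restriction this is immediate: $\Res^\K_\FF$ leaves the underlying sets and the topology unchanged, so it preserves preimages under $\f$ and finite intersections, giving $C^\FF_n(\Res^\K_\FF(\f,U))=\Res^\K_\FF(C^\K_n(\f,U))$. For induction I would use that $\LL$ is free of rank $r$ over $\K$: by \eqref{eq:ctp} and Proposition~\ref{prop:ctp} the functor $\Idn^\LL_\K(-)=\LL\widehat\otimes_\K-$ is, as a functor of topological $\K$-spaces, naturally isomorphic to the $r$-fold direct power, hence exact. Writing $\f^{-1}(U)=\ker\big(V\xrightarrow{\f}V\to V/U\big)$ and $U_1\cap U_2=\ker\big(V\to V/U_1\oplus V/U_2\big)$ and invoking exactness yields $\Idn^\LL_\K(\f^{-1}U)=(\Idn^\LL_\K\f)^{-1}(\Idn^\LL_\K U)$ and $\Idn^\LL_\K(U_1\cap U_2)=\Idn^\LL_\K(U_1)\cap\Idn^\LL_\K(U_2)$; iterating over the $n$ terms gives $C^\LL_n(\Idn^\LL_\K(\f,U))=\Idn^\LL_\K(C^\K_n(\f,U))$.

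Next I would record the effect on the finite-dimensional quotients. Since both functors are exact they descend to $U/C^\K_n(\f,U)$, and for a $d$-dimensional $\K$-space $Q$ one has $\dim_\FF\Res^\K_\FF(Q)=m\,d$ while $\dim_\LL(\LL\otimes_\K Q)=d$. Consequently $H^*_\FF(\Res^\K_\FF(\f,U))=m\,H^*_\K(\f,U)$ and $H^*_\LL(\Idn^\LL_\K(\f,U))=H^*_\K(\f,U)$, the disagreement of the two scalars being exactly the origin of the factor $[\K:\FF]$ in the first formula and its absence in the second. To pass to the suprema I would check that the functors carry $\BV$ to local bases at $0$: for restriction the topology is unchanged and each $\Res^\K_\FF(U)$ is open and linearly compact over $\FF$, while for induction, writing $V=\varprojlim_{U\in\BV}V/U$ and $\Idn^\LL_\K(U)=\ker\big(\Idn^\LL_\K V\to\Idn^\LL_\K(V/U)\big)$, the family $\{\Idn^\LL_\K(U)\mid U\in\BV\}$ is precisely the local basis describing the topology of the induced space. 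Applying (eP5) over these bases then gives
$$\ent^*_\FF(\Res^\K_\FF(V,\f))=\sup_{U\in\BV}m\,H^*_\K(\f,U)=m\cdot\ent^*_\K(V,\f)$$
and, identically, $\ent^*_\LL(\Idn^\LL_\K(V,\f))=\ent^*_\K(V,\f)$.

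I expect the main obstacle to lie entirely in the induction half: both the commutation with cotrajectories and the fact that $\Idn^\LL_\K$ sends $\BV$ to a local basis hinge on the exactness of $\LL\widehat\otimes_\K-$, which is available only because $[\LL:\K]<\infty$ makes $\LL$ free over $\K$ (this is also where the necessity of finiteness enters). Restriction, by contrast, reduces to bookkeeping of $\FF$-dimensions against $\K$-dimensions and the triviality that restriction of scalars changes neither the topology nor the underlying set.
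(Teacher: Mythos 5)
Your proposal is correct and follows essentially the same route as the paper: both arguments rest on the commutation of the functors with cotrajectories, the dimension bookkeeping $\dim_\FF=[\K:\FF]\cdot\dim_\K$ versus $\dim_\LL(\LL\otimes_\K-)=\dim_\K$ on the finite-dimensional quotients $U/C_n$, and the change-of-basis property (eP5) to pass to the suprema. The only cosmetic difference is that the paper computes over the ``good'' basis of $\prod\K$ while you work directly with $\BV$ and make the exactness of $\LL\widehat\otimes_\K-$ explicit, which the paper relegates to a footnote.
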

For a field $\EE$, the collection of all open $\EE$-subspaces $\prod A_i$ of $\prod\EE$ such that 
\begin{enumerate}[(G1)]
\item $A_i=\EE$ for all but finitely many indices,
\item $A_i=0$ otherwise;
\end{enumerate}
 form a neighbourhood basis at 0 for the Tychonoff topology on $\prod\EE$. We will refer to such a basis as  {\it good basis} of $\prod\EE$ and its elements will be said to be {\it good}.

\begin{proof}[Proof of Proposition~\ref{prop:main}]
For an arbitrary flow $(V,\f)$ over ${}_\K\LC$, let $U=\prod A_i$ be a good $\K$-subspace of $V$.  Thus
\begin{align}
H_\FF^*(\Res^\K_{\FF}(\f,U))&=\lim_{n\to\infty}\frac{1}{n}\cdot\mathrm{codim}_\FF\,C^\FF_n(\Res^\K_{\FF}(\f,U))=\\
&=\lim_{n\to\infty}\frac{[\K:\FF]}{n}\cdot\mathrm{codim}_\K\,C^\K_n(\f,U)=[\K:\FF]\cdot H_\K^*(\f,U).
\end{align}
 and notice that $\Res^\K_\FF(U)$ is a good $\FF$-subspace of $V$. Since  good subspaces form a neighbourhood basis at 0, one has that  
\begin{equation}
[\K:\FF]\cdot\ent_\K^*(V,\f)\leq\ent_\FF^*(\Res^\K_{\FF}(V,\f)).
\end{equation}
Finally, the equality holds since every good $\FF$-subspace of $\Res^\K_{\FF}(V)$ can be realised as the restriction of a good  $\K$-subspace of $V$. In order to prove the second part of the statement, notice that
$$\dim_\K(-)=\dim_\LL(\Idn_\K^\LL(-))\quad\text{and}\quad C^\LL_n(\Idn^\LL_{\K}(\f,U))=\Idn_\K^\LL(C^\K_n(\phi,U)),\ \text{($U\in\mathcal{B}(V), n\in\N$)}.$$
Therefore,
\begin{align}
H^*(\Idn^\LL_{\K}(\f,U))&=\lim_{n\to\infty}\frac{1}{n}\cdot\mathrm{codim}_\LL\,C^\LL_n(\Idn^\LL_{\K}(\f,U))=\lim_{n\to\infty}\frac{1}{n}\cdot\mathrm{codim}_\LL\,\Idn_\K^\LL(C_n(\phi,U))= \footnote{Since $\LL\otimes_\K-$
 is an exact functor, the sequence $0\to \K\otimes_\FF A\to\K\otimes_\FF B\to\K\otimes_\FF (A/B)\to0$ is exact.}\\
&=\lim_{n\to\infty}\frac{1}{n}\cdot\mathrm{codim}_\K\,C_n(\f,U)= H^*(\f,U).
 \end{align}
Same reasoning as above yields $\ent^*(\idn^\LL_{\K}(V,\f))=\ent^*(V,\f).$
\end{proof}
\begin{rem} By latter result, it follows that 
$$\ent_\K^*(\Idn_\FF^\K(\Res_\FF^\K(V,\phi)))=[\K:\FF]\cdot\ent_\K^*(V,\phi)=\ent_\K^*(\Res_\K^\LL(\Idn_\K^\LL(V,\phi))).$$
This gives back an intuitive method to generate examples of topological flows with some finite topological entropy $n\in\N$. E.g., let $(V,{}_\K\beta)$ be the 1-dimensional left Bernoulli shift defined above. One easily generates the $n$-dimensional left Bernoulli shift over $\K$ by combining induction and restriction with respect to a field extension of $\K$ of degree $n$.
\end{rem}
\subsection{General case}

Recall that every locally linearly compact $\K$-space $V$ can be split into a topological direct sum of a linearly compact open $\K$-space $V_c$ and a discrete $\K$-space $V_d$; namely, $V\cong_{top} V_c\oplus V_d$. Such a decomposition is available whenever we consider a linearly compact open subspace $U$ of $V$. Indeed, $V\cong_{top} U\oplus V/U$.

Now let $(V,\f)$ be a flow over ${}_\K\LLC$ . Any decomposition $V\cong_{top} V_c\oplus V_d$ induces a decomposition
\begin{equation}
\f=
\begin{pmatrix}
  \f_{cc} & \f_{dc} \\
  \f_{cd} & \f_{dd}
\end{pmatrix},
\end{equation}
where $\phi_{\bullet*}:V_\bullet\to V_*$ is the composition $\phi_{\bullet*}=p_*\circ\phi\circ\iota_\bullet$ for $\bullet, *\in\{c,d\}$.
Therefore, $\f_{\bullet*}$ is continuous being composition of continuous homomorphisms.

\begin{lem} Let $\FF\leq\K\leq\LL$ be finite field extensions and $V\cong_{top} V_c\oplus V_d$ a locally linearly compact $\K$-space together with a continuous endomorphism $\f\colon V\to V$. Thus
\begin{enumerate}
\item $\Res^\K_\FF(V)$ admits a decomposition such that
$$\Res^\K_\FF(V)_c=\Res^\K_\FF(V_c)\quad\text{and}\quad\Res^\K_\FF(\phi)_{cc}=\Res^\K_\FF(\phi_{cc}).$$
\item $\Idn^\LL_\K(V)$ admits a decomposition such that
$$\Idn_\K^\LL(V)_c=\Idn_\K^\LL(V_c)\quad\text{and}\quad\Idn^\LL_\K(\phi)_{cc}=\Idn^\LL_\FF(\phi_{cc}).$$
\end{enumerate}
\end{lem}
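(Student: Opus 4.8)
The plan is to use that both $\Res^\K_\FF$ and $\Idn^\LL_\K$ are additive, so they carry the topological direct sum $V\cong_{top}V_c\oplus V_d$ to a direct sum of the images; the work is then to recognise which summand of the image is the linearly compact open one and which is discrete, after which the identities on the corner maps fall out of functoriality. Concretely, once I know that the relevant decomposition of $F(V)$ is the one whose linearly compact open part is $F(V_c)$, I will write its projections and inclusions as $F(p_c)$ and $F(\iota_c)$ and compute, for $F\in\{\Res^\K_\FF,\Idn^\LL_\K\}$,
$$F(\f)_{cc}=F(p_c)\circ F(\f)\circ F(\iota_c)=F(p_c\circ\f\circ\iota_c)=F(\f_{cc}),$$
which is exactly the asserted compatibility. (In (2) the symbol $\Idn^\LL_\FF(\phi_{cc})$ should read $\Idn^\LL_\K(\phi_{cc})$.) The reason the structure maps of the image decomposition are $F(p_c),F(\iota_c)$ is that $V\cong_{top}V_c\oplus V_d$ is a biproduct, whose defining relations an additive functor preserves.

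First I would dispose of (1), which is essentially immediate. The restriction functor does not alter the underlying topological abelian group or its topology — it only reinterprets scalars through $\FF\hookrightarrow\K$ — so $\Res^\K_\FF(V)=\Res^\K_\FF(V_c)\oplus\Res^\K_\FF(V_d)$ with literally the same open sets. I would then observe that $\Res^\K_\FF(V_c)$ is linearly compact by the construction underlying \eqref{eq:resLC} (a linearly compact $\K$-space restricts to a linearly compact $\FF$-space) and remains open since the topology is untouched, while $\Res^\K_\FF(V_d)$ is discrete; hence this is the canonical $\FF$-decomposition and one may take $\Res^\K_\FF(V)_c=\Res^\K_\FF(V_c)$.

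For (2) I would argue as follows. Additivity gives the algebraic splitting $\Idn^\LL_\K(V)\cong\Idn^\LL_\K(V_c)\oplus\Idn^\LL_\K(V_d)$ of $\LL$-spaces. The summand $\Idn^\LL_\K(V_c)$ is linearly compact by the Proposition on induction of linearly compact spaces, and it is open because the sets $\Idn^\LL_\K(U)$ with $U$ linearly compact open in $V$ form, by the definition in \S\ref{ss:changeLLC}, a local basis at $0$ of $\Idn^\LL_\K(V)$, and $V_c$ is such a $U$. Being simultaneously linearly compact and open, $\Idn^\LL_\K(V_c)$ splits off topologically, and exactness of $\LL\otimes_\K-$ identifies the complementary quotient with $\Idn^\LL_\K(V_d)$, which is discrete because $\{0\}$ is open in $V_d$ and $\Idn^\LL_\K(\{0\})=0$. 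This yields $\Idn^\LL_\K(V)_c=\Idn^\LL_\K(V_c)$.

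The hard part is confined to (2): unlike restriction, induction genuinely changes the underlying space, so I cannot simply transport the topology. The crux is to verify that the purely algebraic direct sum produced by additivity is actually a topological direct sum into a linearly compact open piece plus a discrete piece, rather than merely a vector-space splitting. I expect this to reduce entirely to the two facts used above — that $\Idn^\LL_\K(V_c)$ is at once linearly compact and open — since once one summand is recognised as linearly compact open, the complementary summand is forced to be the discrete part of the canonical decomposition.
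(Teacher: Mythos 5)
Your proof is correct and takes essentially the same route as the paper's, whose entire argument is that the restriction and induction functors are additive and hence carry the decomposition $V_c\oplus V_d$ to the corresponding decomposition of the image; you merely spell out the details the paper leaves implicit (that $F(V_c)$ is linearly compact and open, hence the canonical summand, and that additivity transports the biproduct structure maps, giving the identity on the corner maps). You are also right that $\Idn^\LL_\FF(\phi_{cc})$ in part (2) should read $\Idn^\LL_\K(\phi_{cc})$.
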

\begin{proof} Notice that $\Res^\K_\FF(V)$ (or $\Idn^\LL_\K(V)$) can be obtained by restricting (or extending) $V_c$ and $V_d$ first in the corresponding categories and then summing them up in ${}_\K\LLC$. Indeed both restriction functor  and induction functor are additive.
\end{proof}
\begin{thm}\label{thm:main}
Let $\FF\leq\K\leq\LL$ such that $[\LL:\FF]<\infty$. For every flow $(V,\f)$ over ${}_\K\LLC$ one has
$$\ent_\FF^*(\Res^\K_{\FF}(V,\f))=[\K:\FF]\cdot\ent_\K^*(V,\f)\quad\text{and}\quad\ent_\LL^*(\Idn_\K^{\LL}(V,\f))=\ent_\K^*(V,\f).$$
\end{thm}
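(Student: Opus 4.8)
The plan is to reduce the theorem to the per-subspace computation already performed in Proposition~\ref{prop:main} and then to globalise it with the change-of-basis property (eP5). Recall that, by the very definition of the functors in \S\ref{ss:changeLLC}, the families $\{\Res^\K_\FF(U)\mid U\in\BV\}$ and $\{\Idn^\LL_\K(U)\mid U\in\BV\}$ are local bases at $0$ of $\Res^\K_\FF(V)$ and $\Idn^\LL_\K(V)$ respectively, each member being a linearly compact open subspace over the relevant field. Hence, by the change-of-basis property, it suffices to establish, for every linearly compact open $\K$-subspace $U$ of $V$, the two local identities
\[
H_\FF^*(\Res^\K_\FF(\f,U))=[\K:\FF]\cdot H_\K^*(\f,U)\qquad\text{and}\qquad H_\LL^*(\Idn^\LL_\K(\f,U))=H_\K^*(\f,U),
\]
and then to take the supremum over $U\in\BV$.

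For the restriction formula I would first observe that the cotrajectory is built purely from finitely many intersections and preimages of the underlying endomorphism, and that $\Res^\K_\FF$ changes neither the underlying set nor the map $\f$; consequently $C^\FF_n(\Res^\K_\FF(\f,U))=\Res^\K_\FF(C^\K_n(\f,U))$. As restriction is exact it commutes with the quotient $U/C^\K_n(\f,U)$, which is a finite-dimensional discrete $\K$-space by (P4); viewing it over $\FF$ multiplies its dimension by $[\K:\FF]$. This yields the first local identity by exactly the computation displayed in the proof of Proposition~\ref{prop:main}, the only change being that $U$ is now an arbitrary member of $\BV$ rather than a good subspace of a product $\prod\K$.

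The induction formula is the delicate one, and I expect it to be the main obstacle. Here $\Idn^\LL_\K=\LL\widehat\otimes_\K-$ genuinely alters the space, so I must verify that the completed tensor functor commutes with the cotrajectory construction, i.e.\ $C^\LL_n(\Idn^\LL_\K(\f,U))=\Idn^\LL_\K(C^\K_n(\f,U))$. This rests on the flatness of $\LL$ over $\K$: the exactness of $\LL\otimes_\K-$ (used already in the proof of Proposition~\ref{prop:main}) lets the functor commute both with the finite intersections and with the preimages $\Idn^\LL_\K(\f)^{-1}\Idn^\LL_\K(U)=\Idn^\LL_\K(\f^{-1}U)$ defining $C^\LL_n$. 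The care required is that these are \emph{completed} tensor products and inverse limits rather than ordinary ones, so I would descend to the finite-dimensional quotients $U/C^\K_n(\f,U)$ and apply exactness there, using the explicit inverse-limit description of Proposition~\ref{prop:ctp} together with the short exact sequence $0\to\LL\otimes_\K A\to\LL\otimes_\K B\to\LL\otimes_\K(B/A)\to0$. Since $\dim_\LL(\LL\otimes_\K W)=\dim_\K W$ for finite-dimensional $W$, the codimension of each cotrajectory is unchanged, which gives the second local identity.

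Taking suprema over $U\in\BV$ and invoking the change-of-basis property upgrades the two local identities to the asserted entropy equalities; the preceding decomposition Lemma guarantees that this per-subspace analysis is compatible with the splitting $V\cong_{top}V_c\oplus V_d$, so that restricting (resp.\ inducing) the linearly compact open part and summing in ${}_\bullet\LLC$ loses nothing. All remaining steps are the bookkeeping of dimensions across a finite extension, which is precisely where the hypothesis $[\LL:\FF]<\infty$ is indispensable, a discrete linearly compact space being finite-dimensional by (P4).
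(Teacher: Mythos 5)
Your argument is correct, but it reaches the theorem by a genuinely different route than the paper. The paper's own proof is a two-step reduction: it splits $V\cong_{top}V_c\oplus V_d$, uses the preceding lemma to see that $\Res^\K_\FF$ and $\Idn^\LL_\K$ are compatible with this splitting and with $\f_{cc}$, and then invokes the reduction of $\ent^*$ to the linearly compact case from \cite{CGB} together with Proposition~\ref{prop:main} (whose proof works with good subspaces of $\prod\K$). You instead prove the two local identities $H_\FF^*(\Res^\K_\FF(\f,U))=[\K:\FF]\cdot H_\K^*(\f,U)$ and $H_\LL^*(\Idn^\LL_\K(\f,U))=H_\K^*(\f,U)$ directly for an \emph{arbitrary} $U\in\BV$ of an arbitrary locally linearly compact $V$ --- using that restriction changes nothing set-theoretically, and that $\LL\otimes_\K-$ is exact and commutes with finite intersections and with preimages because $\LL$ is free of finite rank over $\K$ --- and then globalise via the change-of-basis property (eP5), since $\{\Res^\K_\FF(U)\mid U\in\BV\}$ and $\{\Idn^\LL_\K(U)\mid U\in\BV\}$ are by construction local bases of the image spaces consisting of linearly compact open subspaces. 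This is more self-contained: it subsumes Proposition~\ref{prop:main} as a special case and makes both the decomposition lemma and the external reduction from \cite{CGB} unnecessary (your closing appeal to that lemma is in fact redundant in your own argument). What the paper's route buys is reuse of machinery already established in \cite{CGB}; what yours buys is uniformity and independence from the good-basis bookkeeping. Two small points worth making explicit: the identity $(\mathrm{id}_\LL\otimes\f)^{-1}(\LL\otimes_\K U)=\LL\otimes_\K\f^{-1}(U)$ follows at once by writing $\LL=\bigoplus_{i=1}^n\K e_i$ and arguing componentwise; and since $[\LL:\K]<\infty$ the completed tensor product with $\LL$ coincides with the ordinary one, so the completion issues you flag as the main obstacle in fact evaporate.
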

\begin{proof}
By the previous lemma, it follows directly from Proposition~\ref{prop:main} and the reduction to linearly compact vector spaces devolopped in \cite{CGB}.
\end{proof}


\begin{thebibliography}{AAAA}

\bibitem{AKM} R.L. Adler, A.G. Konheim  and M.H. McAndrew , 
\newblock {\em Topological entropy}.
\newblock  Trans. Amer. Math. Soc. 114 (1965) 309-319.
 
 \bibitem{BDGB} F. Berlai, D. Dikranjan and A. Giordano Bruno,
 \newblock {\it Scale function vs Topological entropy. Topology and its Applications.}
 \newblock 160(18), (2013) 2314-2334.
 
\bibitem{Bowen} R. Bowen, 
\newblock {\em Entropy for group endomorphisms and homogeneous spaces.} 
\newblock Trans. Amer. Math. Soc. 153 (1971) 401-414.

\bibitem{IC} I. Castellano,
\newblock {\it The corank of a flow over the category of linearly compact vector spaces}.
\newblock Available at https://arxiv.org/pdf/1803.04018.pdf (2018).


\bibitem{CGB} I. Castellano and A. Giordano Bruono, 
\newblock {\it Topological entropy for locally linearly compact spaces}.
\newblock  Topology and its Application (2016), to appear.

\bibitem{Hood} B.M. Hood, 
\newblock {\em Topological entropy and uniform spaces}
\newblock J. London Math. Soc. (2) 8 (1974) 633-641.

\bibitem{GBV} A. Giordano Bruno and S. Virili,
\newblock {\it Topological entropy in totally disconnected locally compact groups}.
\newblock  Ergodic Theory and Dynamical Systems, 37(7), (2017) 2163-2186.

\bibitem{RZ} L. Ribes and P. Zalesskii,
\newblock {\it Profinite groups.}  
\newblock Springer, Berlin, Heidelberg, (2000). 

\bibitem{Willis}  G.A. Willis,
\newblock {\it  The structure of totally disconnected locally compact groups.}
\newblock  Math. Ann., 300:341-363, 1994.
\end{thebibliography}
\end{document}